\newtheorem{thm}{Theorem}
\newtheorem{lemma}{Lemma}
\newtheorem*{defi}{Definition} \theoremstyle{remark}
 \theoremstyle{plain}
\newtheorem{cor}[thm]{Corollary} 
\title{Subdivision rules for all Gromov hyperbolic groups}
\author{Brian Rushton}
\begin{document}
\maketitle

\abstract{This paper shows that every Gromov hyperbolic group can be described by a finite subdivision rule acting on the 3-sphere. This gives a boundary-like sequence of increasingly refined finite cell complexes which carry all quasi-isometry information about the group. This extends a result from Cannon and Swenson in 1998 that hyperbolic groups can be described by a recursive sequence of overlapping coverings by possibly wild sets, and demonstrates the existence of non-cubulated groups that can be represented by subdivision rules.}

\section{Introduction}

\begin{defi}
A geodesic metric space satisfies the \textbf{thin triangles condition} if there is a global constant  $\delta > 0$ such that each edge of a geodesic triangle is contained within the $\delta$-neighborhood of the other two edges. Such a geodesic metric space is also called $\delta$-\textbf{hyperbolic}.

A \textbf{(Gromov) hyperbolic group} is a group with a Cayley graph that satisfies the thin triangles condition for some $\delta>0$.
\end{defi}

Recursive structures on hyperbolic groups have played an important role in geometric and combinatorial group theory. This research area started with Cannon's paper on linear recursions for hyperbolic groups \cite{cannon1984combinatorial}. Inspired by this, Thurston developed the concept of an automatic group, which was expanded on by Epstein and others \cite{epstein1992word}. Subdivision rules, which are the topic of this paper, were first described in \cite{hyperbolic}, where Cannon and Swenson showed that the boundary at infinity of a hyperbolic group can be described by a recursively defined sequence of possibly wild compact sets with large overlap (see Theorem 3.30 of the above paper).

In one special case worked out by Cannon, Floyd, and Parry, these coverings were actually tilings of the 2-sphere by polygons, a particularly nice type of covering. They defined finite subdivision rules as a topological space and map that recursively generate a sequence of tilings on a sphere \cite{subdivision}, in order to model coverings similar to those generated by negative sets. As they said in \cite{subdivision}, "Finite subdivision rules model the recursive structures of sequences of disk covers arising from negatively curved groups. The above sequences of disk covers do not generally arise from finite subdivision rules mainly because pairs of distinct elements of a disk cover may have large intersection, whereas finite subdivision rules deal with tilings, in which distinct elements can only intersect in their boundaries." Thus, finite subdivision rules were used to model the behavior of hyperbolic groups at infinity, but only one group was known to be described by a finite subdivision rule at infinity.

The definition of a subdivision rule was expanded by the author in \cite{CubeSubs} and \cite{rushton2014classification} to allow finite subdivision rules in other dimensions. In this paper, we show that all hyperbolic groups can be represented by a finite subdivision rule in dimension 3.

We give the expanded definition here:

\begin{defi}

A map between cell complexes is \textbf{cellular} if it is a homeomorphism when restricted to each open cell.

A cell complex is \textbf{almost polyhedral} if the gluing maps in the cell complex are cellular, and if the subcells of each closed cell are contained in the cell's boundary.

A \textbf{finite subdivision rule} is an almost polyhedral cell complex $S$ equipped with a cellular map $\phi_S$ (called the \textbf{subdivision map}) onto itself. A finite subdivision rule $S$ \textbf{acts} on a cell complex $X$ if there is a cellular map $f$ (called the \textbf{structure map}) from $S$ to $X$. If $S$ acts on $X$, then $X$ is called an $S$-complex. Then the $n$\textbf{-th subdivision} $S^n(X)$ of $X$ is the new cell structure on $X$ obtained by pulling back the map $\phi_S^n\circ f:X\rightarrow S$.

\end{defi}

The cell structures $X=S^0(X), S^1(X), S^2(X),...$ on $X$ are nested. Thus, a finite subdivision rule creates a sequence of coverings of $X$ by compact sets (specifically, cells with disjoint interiors).

\begin{defi}
Let $(X,d_X), (Y,d_Y)$ be metric spaces. A map $f:X\rightarrow Y$ is called a \textbf{quasi-isometry} if there is a constant $K$ such that:
\begin{enumerate}
\item for any points $x,y$ in $X$, $\frac{1}{K}d_X(x,y)-K\leq d_Y(f(x),f(y))\leq Kd_X(x,y)+K$
\item every point of $Y$ is within $K$ of the image of $X$
\end{enumerate}
\end{defi}

In this paper, we show that, for any hyperbolic group, there is a finite subdivision rule acting on the 3-sphere from which the original group can be reconstructed, up to quasi-isometry. We say that the subdivision rule \textbf{represents} the hyperbolic group.

As mentioned earlier, finite subdivision rules were initially studied as a means of understanding the recursive structure of a hyperbolic group's boundary at infinity. However, for many years, only one example of a hyperbolic group with a finite subdivision rule on the boundary at infinity was known \cite{subdivision}. The author expanded the list of known subdivision rules to hyperbolic 3-manifolds created from right-angled polyhedra \cite{PolySubs}. This paper greatly enlarges the list of groups known to be described by finite subdivision rules. In particular, this theorem provides the first examples of non-cubulated groups that can be represented by finite subdivision rules.

In \cite{rushton2014classification}, we show that groups with subdivision rules have an atlas that translates quasi-isometry properties of the group into combinatorial properties of the subdivision rule. Thus, all of the correspondences in that atlas apply to these subdivision rules for hyperbolic groups.

\section{Lexicographical ordering and cone types}

\begin{defi}Given an ordering $\prec$ on a symmetric generating set for a group $G$, the geodesics in the Cayley graph of $G$ can be ordered lexicographically; that is, two geodesics are ordered according to the first generator by which they differ. This is a total order \cite{calegari2013ergodic}. Then, given multiple geodesics from the origin to a vertex $x$, there is a unique geodesic $u$ such that $u \prec v$ for all geodesics $v$ from the origin to $x$. Such a geodesic is called \textbf{lexicographically first} among all geodesics from the origin to $v$ \cite{calegari2013ergodic}.

A \textbf{(finitely) labeled graph} is a graph together with a map from the edges of the graph to a finite set of \textbf{edge labels}, and a map from the vertices of the graph to a finite set of \textbf{vertex labels}. For purposes of this article, we include unions of open edges as labeled graphs.

For background on labeled graphs, see \cite{gallian2009dynamic}.

A \textbf{deterministic finite state automaton} is a finite directed labeled graph  with the following requirements. The vertex labels are unique, and are called \textbf{states}, while the edge labels are called \textbf{inputs}. One state is chosen as the \textbf{initial state}, and a subset of states is chosen as \textbf{accept states}. Finally, the set of inputs coming out of each vertex is independent of vertices, i.e. it is the same for all vertices.

A \textbf{regular language} is a subset of the free monoid $M$ on the set of inputs of a deterministic finite state automaton. In particular, it consists of exactly the words which describe a path in the deterministic finite state automaton from an initial state to an accept state.

A regular language is \textbf{prefix-closed} if, whenever $t=su$ in $M$ with $t$ in the regular language, $s$ is also in the regular language.

\end{defi}

\begin{thm}\label{PreTheorem}
(Theorem 3.2.2 of \cite{calegari2013ergodic} and its proof) The set of all words in generators of $G$ that represent lexicographically first geodesics is a prefix-closed regular language with inputs consisting of single generators.
\end{thm}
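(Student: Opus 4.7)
The plan is to handle the two claims---prefix-closedness and regularity---separately.

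\textbf{Prefix-closedness} is a direct exchange argument. Suppose $w = uv$ is the lex-first geodesic from $1$ to a vertex $x$, and suppose for contradiction that the prefix $u$ is not lex-first to its endpoint $y$. Then there is a strictly lex-smaller geodesic $u'$ from $1$ to $y$, and $u'v$ is a word of the same length as $w$ representing $x$, hence a geodesic. Since $u' \prec u$, the two geodesics $u'v$ and $uv$ first differ at some position within the first $|u|$ letters, so $u'v \prec w$, contradicting the lex-firstness of $w$. This step uses nothing about hyperbolicity.

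\textbf{Regularity} is the substantive part, and here I would invoke Cannon's classical theorem that a Gromov hyperbolic group has only finitely many cone types, where the cone type of $g$ is determined by the set $\{h : |gh| = |g| + |h|\}$ of elements that extend a geodesic ending at $g$. Using this, the language of all geodesic words in $G$ is already recognized by a finite DFA whose states are cone types and whose transition $T \xrightarrow{s} T'$ exists precisely when $s$ lies in the cone of type $T$; the image state $T'$ is the cone type of the new vertex, which depends only on $T$ and $s$. This gives regularity of the larger language of geodesic words.

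To carve out the lex-first sublanguage, I would refine the state so that it additionally records bounded-depth combinatorial data about potential ``shadow'' geodesics to the current endpoint $v$: words that bifurcate from the current one earlier and rejoin at $v$ via a lex-smaller route. The $\delta$-thin triangles condition implies that any two geodesics between the same pair of endpoints fellow-travel within a universal distance depending only on $\delta$, so any such competitor lives in a bounded neighborhood of the current path near its terminus. Thus the relevant extra data is drawn from a ball of bounded radius around $v$ and depends only on the cone type (plus a uniformly bounded window), giving finitely many enriched states. A transition on input $s$ is permitted only if, first, $s$ extends the geodesic, and second, no lex-smaller generator already in the cone gives rise to a competing geodesic that could reach the new endpoint $vs$.

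The main obstacle is verifying that this ``lex-firstness refinement'' really has only finitely many states and that its update under reading a generator is determined by strictly local information. Concretely, one must show via thin triangles that whether the extended word is lex-first to its endpoint is captured by the cone type of the endpoint together with a bounded-depth snapshot of nearby alternative geodesics, and that this snapshot evolves under appending a single generator in a way that depends only on the current snapshot and the generator. Making this precise---essentially adapting the cone-type DFA construction for strongly geodesically automatic structures on hyperbolic groups to the lex-first selector---is the delicate content of the theorem; prefix-closedness alone would work in any metric space, but regularity is where hyperbolicity genuinely enters.
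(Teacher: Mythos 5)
The paper offers no argument for this statement at all; it is quoted verbatim as Theorem 3.2.2 of the cited Calegari reference, so there is no in-paper proof to compare against. Your outline does follow the standard route of that source. The prefix-closedness half is complete and correct: a prefix of a geodesic is a geodesic to its own endpoint, the substituted word $u'v$ has the right length and hence is geodesic, and the first difference occurs inside the prefix, so $u'v\prec uv$. Nothing more is needed there.

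The regularity half, however, stops exactly at the step that carries the content, and as written two of its assertions would fail if implemented literally. First, the claim that the extra data ``depends only on the cone type (plus a uniformly bounded window)'' is not right as stated: the cone type of a competitor's endpoint $vh$ is \emph{not} a function of the cone type of $v$ and the offset $h$, so it cannot be reconstructed from the main state; it must be carried explicitly as part of the state and updated along the competitor's own word via Cannon's rule that the cone type of $gs$ is determined by the cone type of $g$ and the generator $s$. The state should therefore be the cone type of $v$ together with a subset of $B_{2\delta}(1)\times\{\text{cone types}\}$ recording the offsets and cone types of all live lex-smaller geodesic words of the current length; finiteness of this state set is exactly where Cannon's finiteness of cone types and the synchronous fellow-traveling bound (two geodesics from $1$ to a common endpoint satisfy $d(\gamma(t),\gamma'(t))\le 2\delta$ for every $t$ --- essentially the argument of Lemma \ref{CloseLem}) enter, the latter justifying that a competitor which drifts outside $B_{2\delta}(1)$ can be discarded forever. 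Second, your transition condition ``no competing geodesic \emph{could reach} the new endpoint'' is not decidable from local data; the correct formulation is to update the competitor set (extend each live competitor by every cone-admissible generator, adjoin new branches $vs'$ for $s'\prec s$ in the cone of $v$, translate offsets by $s$, prune what leaves the ball) and to reject precisely when some competitor occupies offset $1$. With those corrections the subset construction closes, but supplying them is the theorem, and your proposal explicitly defers it.
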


\begin{cor}
The union of all lexicographically first geodesics is a tree.
\end{cor}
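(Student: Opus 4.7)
The plan is to interpret the prefix-closure in Theorem \ref{PreTheorem} as saying that every vertex in the union has a well-defined ``lexicographically first parent,'' and then to recognize the union as the graph of this parent function, which is manifestly a tree.

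First I would note that since the lexicographical order is a total order on geodesics, for each vertex $x \in G$ there is a unique lexicographically first geodesic $\gamma_x$ from the identity $1$ to $x$. For $x \neq 1$, I define the parent $p(x)$ to be the penultimate vertex on $\gamma_x$, so that the final edge of $\gamma_x$ runs from $p(x)$ to $x$.

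The central step is to show that every edge of the union has the form $\{p(x), x\}$ for some vertex $x$. Given such an edge, it lies on some $\gamma_y$; let $w$ be its endpoint farther from $1$ along $\gamma_y$ and let $v$ be the other endpoint. The initial segment of $\gamma_y$ up to $w$ is itself a geodesic from $1$ to $w$, and by the prefix-closure half of Theorem \ref{PreTheorem} the word labeling it is lexicographically first; that is, this initial segment equals $\gamma_w$. Hence $v = p(w)$, and the edge is exactly $\{p(w), w\}$.

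With this in hand the tree property follows quickly. Connectedness is immediate, since each vertex $x$ is joined to $1$ by $\gamma_x$, every edge of which lies in the union. Acyclicity follows because each non-identity vertex $w$ has exactly one incident edge of the union going to a vertex strictly closer to $1$, namely $\{p(w), w\}$, which rules out cycles by a standard descent on distance to $1$. I expect the only real thing to be careful about, and really the only potential pitfall, is confirming that no ``extra'' edges sneak into the union beyond these parent edges; but this is precisely what the previous paragraph rules out, via prefix-closure combined with the uniqueness of lex-first geodesics.
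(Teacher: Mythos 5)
Your proof is correct and follows essentially the same route as the paper: both hinge on the prefix-closure statement of Theorem \ref{PreTheorem} giving each vertex a unique descending path to the identity, from which the tree property follows. You are in fact slightly more careful than the paper, since your verification that every edge of the union is a parent edge $\{p(x),x\}$ makes explicit a step that the paper's contract-along-geodesics argument leaves implicit.
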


\begin{proof}
The prefix of one lexicographically first geodesic is another lexicographically first geodesic, by Theorem \ref{PreTheorem}. Thus, any given point in Cayley graph of $G$ has a unique path to the origin. By moving each point along its unique geodesic to the origin, we can see that the Cayley graph is contractible, and thus a tree.
\end{proof}

\begin{defi}
A \textbf{labeled graph morphism} is a graph morphism between labeled graphs that preserves labels.
\end{defi}

\begin{defi}
The \textbf{open star} of a vertex is the union of a vertex with all the open edges that have that vertex as an endpoint.
\end{defi}

\begin{defi}Given $g \in G$ the cone type of $g$, denoted $cone(g)$, is the set of $h \in G$ for which
some geodesic from $id$ to $gh$ passes through $g$.

For any $n$, the $n$-level of $g$ is the
set of $h$ in the ball $B_n(id)$ such that $|gh| < |g|$. \end{defi}

We reproduce the following theorem:

\begin{thm}\label{CannonTheorem}
(Lemma 7.1 of \cite{cannon1984combinatorial}). The $2\delta + 1$ level of an element determines its cone type.
\end{thm}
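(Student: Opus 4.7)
The plan is to argue by induction on $|h|$ that, given $g_1,g_2$ with the same $(2\delta+1)$-level, every $h\in cone(g_1)$ also lies in $cone(g_2)$; the reverse inclusion then follows by symmetry. Let $h$ be of minimal length in $cone(g_1)\setminus cone(g_2)$, so that $|h|\geq 1$ and every proper prefix of $h$ lies in $cone(g_1)\cap cone(g_2)$. Writing $h=h's$ with $s$ a generator, this gives $|g_ih'|=|g_i|+|h|-1$ for both $i=1,2$. Since $|g_2h|$ differs from $|g_2h'|$ by $\pm 1$ and $h\notin cone(g_2)$ excludes the value $|g_2|+|h|$, a parity argument forces $|g_2h|=|g_2|+|h|-2$.

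Next I would apply the thin triangles condition to the geodesic triangle with vertices $id$, $g_2$, $g_2h$, whose sides are a geodesic $\alpha$ from $id$ to $g_2$, the translate $\beta$ from $g_2$ to $g_2h$ of a geodesic from $id$ to $h$, and any geodesic $\gamma$ from $id$ to $g_2h$. The Gromov product at $g_2$ equals $\frac{1}{2}(|g_2|+|h|-|g_2h|)=1$, so the three approximate branch points of the triangle, one on each side at distance roughly $1$ from $g_2$, are pairwise within $O(\delta)$. Comparing the point $g_2s$, at distance $1$ along $\beta$, with the point of $\alpha$ at distance $1$ from $g_2$, one extracts a group element $k$ of word length at most $2\delta+1$ with $|g_2k|<|g_2|$, together with a short group-theoretic identity relating $s$ and $k$.

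The hypothesis that $g_1$ and $g_2$ share the $(2\delta+1)$-level now gives $|g_1k|<|g_1|$. Combining this length saving on the $g_1$ side with the short identity relating $s$ and $k$, I would construct a path from $id$ to $g_1h$ of length strictly less than $|g_1|+|h|$, contradicting $h\in cone(g_1)$ and completing the induction.

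The main obstacle is constant tracking in the thin-triangle extraction: obtaining the sharp bound $|k|\leq 2\delta+1$, rather than some larger multiple of $\delta$, is what makes the final contradiction go through, because the length saving $|g_1k|\leq |g_1|-1$ has to overcome the error incurred in passing from $s$ to $k$. This requires careful choice of the branch points and exploits the fact that the deficit $|g_2|+|h|-|g_2h|=2$ is the minimum possible for a cone-type failure, so the Gromov product at $g_2$ is as small as $1$ and the relevant comparison stays confined to a ball of radius $2\delta+1$ about $g_2$.
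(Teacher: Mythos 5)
The paper itself quotes this result from Cannon's 1984 paper without reproving it, so I am comparing your proposal against Cannon's argument. You have the right skeleton: induction on $|h|$, a minimal counterexample $h=h's$, the thin geodesic triangle with vertices $id$, $g_2$, $g_2h$, extraction of an element $k$ with $|k|\le 2\delta+1$ lying in the level of $g_2$, and transfer of that level element to $g_1$. But two steps do not go through as written. First, the ``parity argument'' forcing $|g_2h|=|g_2|+|h|-2$ is invalid: in a general Cayley graph $|xs|$ can equal $|x|$, so the deficit can be $1$ (your assertion that $2$ is the minimal deficit for a cone-type failure is also false). This is repairable, since the deficit-$1$ case is handled by the same construction, but your later appeal to ``the Gromov product at $g_2$ is as small as $1$'' shows this case split is load-bearing in your write-up.

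Second, and more seriously, $k$ must be read off from a point of $\gamma$, the short geodesic from $id$ to $g_2h$, not from comparing points of $\alpha$ and $\beta$ near $g_2$. Cannon takes $v$ to be the vertex of $\gamma$ at distance $|g_2|-1$ from $id$ (which exists because $|g_2h|\ge |g_2h'|-1\ge |g_2|-1$ by the inductive hypothesis) and sets $k=g_2^{-1}v$. Thinness gives $d(v,g_2)\le 2\delta+1$ whether $v$ is $\delta$-close to $\alpha$ or to $\beta$, and $|v|=|g_2|-1<|g_2|$ puts $k$ in the $(2\delta+1)$-level of $g_2$, hence of $g_1$. The contradiction is then $|g_1h|\le |g_1k|+d(v,g_2h)\le (|g_1|-1)+|h|$, where the crucial bound $d(v,g_2h)=|g_2h|-|g_2|+1\le |h|$ holds precisely because $v$ lies on the short geodesic $\gamma$. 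Your proposal instead compares ``$g_2s$'' with a point of $\alpha$ at distance $1$ from $g_2$; note that the point of $\beta$ at distance $1$ from $g_2$ is $g_2t$ for $t$ the \emph{first} letter of a geodesic word for $h$, not the last letter $s$, and in any case a point $v$ chosen on $\alpha$ gives no control on $d(v,g_2h)$ better than $|h|+|k|$, which is too weak to contradict $|g_1h|=|g_1|+|h|$. The ``error incurred in passing from $s$ to $k$'' that you flag as the main obstacle is not something to be beaten by sharper constants; it disappears entirely once $k$ comes from $\gamma$, because no identity relating $s$ to $k$ is needed at all.
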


\begin{cor}\label{CannonCor}
Let $\Gamma=\Gamma(G)$ be the union of all lexicographically first geodesics in the Cayley graph of $G$. Then $\Gamma$ has finitely many cone types.
\end{cor}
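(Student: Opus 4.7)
My plan is to derive the finiteness of cone types in $\Gamma$ from the regular language structure given by Theorem \ref{PreTheorem}, by associating to each vertex of $\Gamma$ a state of a finite automaton that determines its cone.

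To execute this: by Theorem \ref{PreTheorem} the language $L$ of lexicographically first geodesics is prefix-closed and regular, so it is recognized by a deterministic finite automaton $M$ with finite state set $Q$, in which (after restricting to reachable states) every state is accepting. Each $g\in\Gamma$ has a unique lex-first geodesic $w(g)$, and reading $w(g)$ in $M$ lands in some state $q(g)\in Q$. I would then show that the cone of $g$ in $\Gamma$, namely the set of $h\in G$ for which the (necessarily unique, since $\Gamma$ is a tree) $\Gamma$-geodesic from $id$ to $gh$ passes through $g$, consists exactly of the group elements represented by words $u$ with $w(g)u\in L$. Since $M$ is deterministic, this set of suffixes $u$, and hence the cone itself, depends only on $q(g)$, and there are only $|Q|$ possibilities.

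The step requiring verification is the identification of the $\Gamma$-cone of $g$ with readable suffixes from $q(g)$; this hinges on the fact (built into Theorem \ref{PreTheorem}) that prefixes of lex-first geodesics are lex-first. Thus if $w(g)u$ lies in $L$, then $u$ is the lex-first geodesic from $g$ to $gh$; conversely, the tree-geodesic $u$ from $g$ to any $\Gamma$-descendant concatenates with $w(g)$ to give the lex-first geodesic to $gh$. One also needs to observe that distinct suffixes $u_1,u_2$ readable from $q(g)$ represent distinct group elements, since otherwise $w(g)u_1$ and $w(g)u_2$ would be two lex-first geodesics to the same element, contradicting uniqueness. I do not expect serious conceptual obstacles beyond this bookkeeping; the finite-state automaton supplies all the essential content.
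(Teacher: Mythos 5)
Your proposal is correct and follows essentially the same route as the paper: both arguments take the deterministic automaton recognizing the prefix-closed regular language of lexicographically first geodesics from Theorem \ref{PreTheorem} and observe that the set of readable suffixes, and hence the cone in $\Gamma$, depends only on the state reached, so the number of cone types is bounded by the number of states. Your additional bookkeeping (identifying the $\Gamma$-cone with the readable suffixes via prefix-closedness, and the injectivity of suffix-to-group-element) is a sound elaboration of what the paper leaves implicit.
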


\begin{proof}
By Theorem \ref{PreTheorem}, the words representing lexicographically first geodesics are a regular language; thus, two words that end in the same accept state have the same set of words that can be appended to them, and thus have the same cone type. Therefore, there are only finitely many cone types.
\end{proof}









\section{History graphs and combinatorial subdivision rules}

\begin{defi}Given a subdivision rule $S$ with subdivision map $\phi_S$, we can arbitrarily choose a subset $I$ of $S$ which is the union of closed cells and which maps into itself under $\phi_S$, and call it the \textbf{ideal set} of $S$. Its complement is called the $\textbf{limit set}$, and is denoted by $L$. The \textbf{ideal set and limit set} of an $S$-complex $X$ are the subsets of $X$ which map onto $I$ and $L$, respectively, under the map $f$. The ideal set and limit set of $S^n(X)$ are denoted $I_n$ and $L_n$. A subdivision rule with a choice of ideal and limit set is called a \textbf{colored subdivision rule}.

For purposes of this paper, we define the \textbf{dual graph} of $L_n$ to be the graph consisting of one vertex for each cell of $L_n$, with edges corresponding to inclusion.

Given a finite subdivision rule $S$ acting on a cell complex $X$, the \textbf{history graph} $\Gamma=\Gamma(S,X)$ is the graph consisting of the union of:

\begin{enumerate}
\item A single vertex $O$ called the \textbf{origin}, together with
\item Disjoint subgraphs $\Gamma_n$, each of which is isomorphic to the dual graph of $L_n$, and
\item Edges between $\Gamma_n$ and $\Gamma_{n+1}$ corresponding to subdivision; i.e. an edge connecting a vertex of $\Gamma_n$ (corresponding to a cell $C$) to all vertices of $\Gamma_{n+1}$ whose corresponding cells are contained in $C$, and
\item Edges connecting the origin $O$ to each vertex of $\Gamma_0$.
\end{enumerate}
\end{defi}

\begin{defi}We say that a subdivision rule $S$ acting on a complex $X$ \textbf{represents a metric space} $(Y,d_Y)$, if the history graph $\Gamma(S,X)$ with the path metric is quasi-isometric to $Y$. We say that it \textbf{represents a group} $G$ if $\Gamma(S,X)$ is quasi-isometric to the Cayley graph of G, using path metrics.
\end{defi}

The process of creating a history graph can be reversed; in \cite{rushton2015all}, we defined a \textbf{combinatorial subdivision graph} as a graph $\Xi$ which contains a family of disjoint subgraphs $\Xi_n$ such that:
\begin{enumerate}
\item $\Xi_0$ is a single vertex.
\item Every vertex is contained in some $\Xi_n$.
\item Every vertex $v$ of $\Xi_n$ for $n>0$ is connected to a unique vertex of $\Xi_{n-1}$ called the \textbf{predecessor} of $v$. We define the predecessor of the unique vertex in $\Xi_0$ to be itself.
\item If two vertices of $\Xi_n$ are connected by an edge for some $n>0$, then their predecessors are connected by an edge or are the same vertex.
\item The open stars of any two vertices with the same label are labeled-graph isomorphic.
\item Conditions 3 and 4 allow us to define a graph morphism $\pi:\Xi\rightarrow\Xi$ which sends each vertex to its predecessor. We call the map $\pi$ the \textbf{predecessor map}. Then we require the preimages under $\pi$ of two edges with the same label to be labeled-graph isomorphic. A representative graph in such an isomorphism class is called an \textbf{edge subdivision}. Similarly, we require the preimage of two open stars of vertices with the same label to be labeled-graph isomorphic, and a representative graph in this isomorphism class is called a \textbf{vertex subdivision}.
\end{enumerate}

We then have the following theorem \cite{rushton2015all}:

\begin{thm}Given a combinatorial subdivision graph $\Xi$, there is a finite subdivision rule $S$ acting on a complex $X$ such that the history graph $\Gamma(S,X)$ is quasi-isometric to $\Xi$.
\end{thm}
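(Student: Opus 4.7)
The plan is to reverse the history-graph construction. From the finite labeling data of $\Xi$ I would extract an almost polyhedral complex $S$ together with a subdivision map $\phi_S$, and then realize each $\Xi_{n+1}$ as the dual graph of $L_n$ for successive subdivisions $S^n(X)$ of an $S$-complex $X$ built out of $\Xi_1$. The unique vertex $\Xi_0$ plays the role of the origin $O$ of the history graph, and conditions 3 and 4 guarantee that the predecessor map $\pi$ matches the subdivision-edge structure connecting consecutive $\Gamma_n$'s.

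First, I would extract the finite data. By conditions 5 and 6, the set of isomorphism classes of open stars of labeled vertices, together with the classes of edge subdivisions and vertex subdivisions, is finite. I would assign to each vertex label a closed abstract cell whose boundary face-incidences are read off from the open-star isomorphism class, and to each edge label a prescribed identification between faces of two such cells; performing these identifications on a disjoint union of representative cells produces $S$. The vertex subdivisions then determine $\phi_S$ cell by cell: the subgraph $\pi^{-1}(v)$ is, up to isomorphism, the prescribed refinement of the cell bearing $v$'s label, and these cellular refinements fit together along edge subdivisions to give a well-defined cellular self-map. Next, I would construct $X$ by applying the same recipe to $\Xi_1$ in place of a subdivision datum — its vertices give the cells of $X$, its edges give the face gluings, and the structure map $f$ sends each cell to the cell of $S$ with the same label.

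An induction on $n$, using the definition of $S^n(X)$ as the cell structure pulled back by $\phi_S^n \circ f$, should show that the dual graph of $L_n$ is labeled-isomorphic to $\Xi_{n+1}$, and that the inclusion edges between $L_n$ and $L_{n+1}$ coincide under this identification with the predecessor edges of $\Xi$. Combined with $\Xi_0 \leftrightarrow O$, this gives a graph isomorphism $\Xi \cong \Gamma(S,X)$, so in particular a quasi-isometry. The main obstacle will be coherence of the gluings: one must check that the local labeled-isomorphism prescriptions agree on overlaps so that $S$, $X$, and $\phi_S$ are genuinely well-defined cell complexes and cellular maps rather than merely formal data. This is exactly what conditions (3)--(6) are engineered to ensure, so I expect the verification to be a careful but ultimately routine compatibility argument rather than a deep obstruction.
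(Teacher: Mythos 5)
The paper does not actually prove this theorem; it imports it from \cite{rushton2015all}, so there is no internal proof to compare against. That said, your reverse-engineering strategy --- extract the finite label data, build one cell per vertex label, glue along edge labels, let the vertex and edge subdivisions define $\phi_S$, and seed $X$ with $\Xi_1$ --- is the right high-level idea and matches the cited construction in spirit.

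Two concrete gaps remain. First, your conclusion of a genuine graph isomorphism $\Xi \cong \Gamma(S,X)$ is too strong and cannot hold as stated: the dual graph of $L_n$, as defined in this paper, has a vertex for \emph{every} cell of $L_n$, in every dimension, with edges given by inclusion. Your construction produces one top-dimensional cell per vertex of $\Xi_{n+1}$, but each such cell contributes additional dual-graph vertices for its faces, edges, and vertices. So $\Gamma_n$ is strictly larger than $\Xi_{n+1}$. This is repaired by observing that each top cell has boundedly many subcells (bounded in terms of the finite label set), so collapsing them onto the top cell is a quasi-isometry --- but you should say this, since it is exactly why the theorem asserts quasi-isometry rather than isomorphism. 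Second, and more seriously, the step you defer as ``routine compatibility'' is the actual technical content: you must realize an arbitrary finite adjacency pattern of cells, together with a refinement in which the cells of $\pi^{-1}(v)$ meeting the face for $e$ are exactly those of $\pi^{-1}(e)$, by honest cells with cellular gluing and subdivision maps. This is not automatic in dimension $2$ (non-planar adjacency graphs obstruct it) and is the reason the main theorem lands in dimension $3$, where every finite graph embeds and one has room to position the subdivided cells against the prescribed faces. Your proposal never fixes a dimension or explains why the incidence data is geometrically realizable, so as written the existence of $S$, $X$, and $\phi_S$ is asserted rather than established.
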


In the following section, given a hyperbolic group $G$, we construct a combinatorial subdivision graph quasi-isometric to the Cayley graph of $G$, and use the above theorem to represent $G$ by a finite subdivision rule $S$ and complex $X$.

\section{Proof of main theorem}

\begin{defi}
Given a hyperbolic group $G$, we let $\Gamma=\Gamma(G)$ be the union of all lexicographically first geodesics in the Cayley graph of $G$.

Given $v$ in $\Gamma$, we let $i(v)$ be the corresponding vertex in the Cayley graph of $G$.

We say that two vertices $u_1,u_2$ of $\Gamma$ are \textbf{geodesically close} if if $u_1,u_2$ are the same distance from the origin, and if there are geodesics $\gamma_i$ in the Cayley graph of $G$ starting from $i(u_i)$ in $G$ that pass within a distance of 1 from each other at some pair of vertices which are no closer to the origin than $u_1$ and $u_2$.

We let $\Xi=\Xi(G)$ be the graph that contains $\Gamma$ and also contains an edge connecting each pair of geodesically close vertices.

Given $v$ in $\Xi$, we let $j(v)$ be the corresponding vertex in the Cayley graph of $G$.

When we speak of 'the corresponding vertex' in discussing these 3 graphs, it is implied that we are using these bijections.

\end{defi}

\begin{lemma}\label{CloseLem}
If two vertices $u_1,u_2$ of $Gamma$ are geodesically close, then  $i(u_1)$ and $i(u_2)$ are within $2\delta +1$ of each other in the Cayley graph of $G$. 
\end{lemma}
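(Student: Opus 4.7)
The plan is to bound $d(i(u_1), i(u_2))$ by forming a geodesic triangle in the Cayley graph with vertices at the origin $O$, $p_1$, and $p_2$, and then applying the thin-triangles condition twice, once to each of $i(u_1)$ and $i(u_2)$. The key reading of the hypothesis is that each $\gamma_i$ extends the lex-first geodesic $[O, i(u_i)]$ outward, so that the concatenation $[O, i(u_i)] \cup \gamma_i = [O, p_i]$ is itself a geodesic of length $|p_i| \geq n$, where $n = |u_1| = |u_2|$.

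With this setup, close off the configuration with a geodesic $[p_1, p_2]$ of length at most $1$, yielding a geodesic triangle $T = (O, p_1, p_2)$. The vertex $i(u_1)$ sits at parameter $n$ on the side $[O, p_1]$. By the $\delta$-thin-triangles condition there is a witness vertex $q_1$ on $[O, p_2] \cup [p_1, p_2]$ with $d(i(u_1), q_1) \leq \delta$. If $q_1 \in [O, p_2]$, then a comparison of distances from $O$ forces $|d(O, q_1) - n| \leq \delta$, so $q_1$ is within $\delta$ of $i(u_2)$ and we obtain $d(i(u_1), i(u_2)) \leq 2\delta$. Otherwise $q_1$ lies on the short side $[p_1, p_2]$, and since this side has length at most $1$ we may take $q_1 \in \{p_1, p_2\}$. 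Running the symmetric argument starting from $i(u_2)$ produces a corresponding witness $q_2$ on $[O, p_1] \cup [p_1, p_2]$.

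Combining the two applications, any sub-case with a witness on the opposite long side immediately delivers the bound $2\delta$. The remaining sub-cases, in which both witnesses lie in $\{p_1, p_2\}$, reduce by symmetry to the worst configuration $d(i(u_1), p_1) \leq \delta$ and $d(i(u_2), p_2) \leq \delta$, at which point the triangle inequality
$$d(i(u_1), i(u_2)) \leq d(i(u_1), p_1) + d(p_1, p_2) + d(p_2, i(u_2)) \leq \delta + 1 + \delta = 2\delta + 1$$
finishes the argument.

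The main obstacle is the careful enumeration of sub-cases needed to avoid losing an extra unit and ending up with $2\delta + 2$; the argument critically uses that $[p_1, p_2]$ has length at most $1$, so any witness on it quantizes to one of its endpoints. A secondary subtlety is the interpretation of ``geodesically close'': if the $\gamma_i$ were allowed to be arbitrary geodesics starting at $i(u_i)$, then any two distance-$n$ vertices would be geodesically close (take $\gamma_1, \gamma_2$ running to a common faraway target), and the lemma would be false. The proof therefore requires reading $\gamma_i$ as an extension of a geodesic emanating from $O$.
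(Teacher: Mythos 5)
Your proof is correct in substance and rests on the same core idea as the paper's: extend the lexicographically first geodesics from $O$ through $i(u_1),i(u_2)$ out to the near-meeting points, apply the thin-triangles condition, and use the equidistance $|i(u_1)|=|i(u_2)|=n$ to slide the witness along the other geodesic onto $i(u_2)$. The implementations differ in a way worth noting. The paper forms a degenerate bigon by running both geodesics to the \emph{midpoint} $x$ of the edge joining the meeting points and applies thinness once, obtaining $2\delta$; you form an honest triangle $(O,p_1,p_2)$ with a short third side and apply thinness twice with a case analysis, obtaining $2\delta+1$. Your version is arguably more robust: the paper's assertion that the concatenation through $x$ is still a geodesic is not justified (nothing forces $d(O,x)=d(O,u_i')+1/2$ for both $i$ simultaneously), whereas your triangle requires no such claim. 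Your closing remark about the reading of ``geodesically close'' is also well taken; the paper's own proof confirms that the $\gamma_i$ are meant to extend geodesics emanating from the origin, and under the literal reading the lemma would indeed fail. One small imprecision: in the sub-case where both witnesses lie on $[p_1,p_2]$, you cannot ``take $q_1\in\{p_1,p_2\}$'' while retaining $d(i(u_1),q_1)\le\delta$, since the witness may sit in the interior of that edge and moving it to an endpoint costs up to the length of the displacement; as written, your chain through $p_1$ and $p_2$ does not quite deliver $2\delta+1$. The bound survives immediately, however, by chaining through the witnesses themselves: $d(i(u_1),i(u_2))\le d(i(u_1),q_1)+d(q_1,q_2)+d(q_2,i(u_2))\le\delta+1+\delta$, since $q_1,q_2$ both lie on a geodesic segment of length at most $1$.
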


\begin{proof}
Let $i^{-1}(u_1),i^{-1}(u_2)$ be two vertices in $\Gamma$ that are geodesically close (so that $u_1,u_2$ are in the Cayley graph of $G$). Then there are vertices $u_1',u_2'$ in the Cayley graph of $G$ that are each contained in geodesics from the origin through $u_1,u_2$ respectively, such that $u_1'$ and $u_2'$ are connected by an edge. Let $x$ be the midpoint of this edge. Then for $i=1,2$ there is a geodesic $\gamma_i$ from the origin to $x$ passing through $u_i$ and $u'_i$. Consider the degenerate geodesic triangle consisting of the two geodesics $\gamma_1$ and $\gamma_2$. By the thin triangles condition, $u_1$ must be within $\delta$ of some point $v$ in $\gamma_2$. By the definition of geodesically close and by the triangle inequality, $|d(O,v)-d(O,u_2)|=|d(O,v)-d(O,u_1)|\leq d(u_1,v)\leq \delta$. Then because $v$ and $u_2$ are on the same geodesic, and their distances from the origin are within $\delta$ of each other, then $v$ and $u_2$ are within $\delta$ of each other. Again by the triangle inequality, $d(u_1,u_2)\leq d(u_1,v)+d(v,u_2)\leq \delta+\delta < 2\delta+1$.
\end{proof}

\begin{defi}

A \textbf{cone} $K$\textbf{neighborhood} of an element $g$ in a group $G$ consists of:

\begin{enumerate}
\item the set of all elements $h$ with $|h|<K$ such that $i^{-1}(g)$ and $i^{-1}(gh)$ are geodesically close, and
\item the natural map from the above set to the set of cone types of the Cayley graph of $G$.
\end{enumerate}
\end{defi}

\begin{thm}
Every Gromov hyperbolic group is quasi-isometric to the history graph of a (3-dimensional) finite subdivision rule.
\end{thm}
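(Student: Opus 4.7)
The plan is to exhibit $\Xi(G)$ as a combinatorial subdivision graph with a finite label set, invoke the cited theorem of \cite{rushton2015all} to realize it (up to quasi-isometry) as the history graph of a finite subdivision rule, and separately verify that $\Xi(G)$ is quasi-isometric to the Cayley graph of $G$. I grade $\Xi$ by letting $\Xi_n$ be the subgraph spanned by the vertices at Cayley distance $n$ from the origin, and define the predecessor of $v\in \Xi_n$ to be its unique parent in the lex-first geodesic tree $\Gamma$. The quasi-isometry $j\colon \Xi \to G$ is the vertex bijection: each edge of $\Xi$ translates to a Cayley path of length at most $2\delta+1$ by Lemma \ref{CloseLem}, while conversely each Cayley edge is realized either as an edge of $\Gamma$ (for vertices at consecutive levels) or, for same-level vertices, as a geodesically close pair with the Cayley edge itself serving as witness.

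For the labeling, I assign each vertex $v\in \Xi_n$ the pair consisting of the cone type of $i(v)$ in $\Gamma$ and the cone $K$-neighborhood of $j(v)$ in $G$, for a uniform $K$ depending only on $\delta$; edges are labeled by the ordered pair of their endpoint labels. Corollary \ref{CannonCor} supplies finitely many cone types of $\Gamma$, and Theorem \ref{CannonTheorem} together with the bounded ball of radius $K$ supplies finitely many cone $K$-neighborhoods, so the label set is finite. Axioms 1--4 of a combinatorial subdivision graph fall out quickly: the tree structure of $\Gamma$ gives unique predecessors, and if $u_1,u_2\in \Xi_n$ are geodesically close then pushing the witness vertices one step closer to the origin (combined with Lemma \ref{CloseLem}) forces their predecessors to coincide or to be geodesically close.

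The heart of the argument is axioms 5 and 6: the label of $v$ must determine both the labeled open star of $v$ in $\Xi$ and the labeled preimage of that open star, and of each adjacent edge, under $\pi$. The cone type of $i(v)$ controls which lex-first geodesics extend through $v$ and therefore the downward tree structure, while the cone $K$-neighborhood of $j(v)$ controls exactly which same-level neighbors of $v$ are geodesically close to it. The main obstacle I anticipate is choosing $K$ large enough to guarantee that the cone $K$-neighborhood of $v$ also encodes the cone $K$-neighborhoods of each of its children, so that the local picture propagates consistently under $\pi$. Hyperbolicity together with Theorem \ref{CannonTheorem} should allow $K$ of order $4\delta+2$ to suffice, since any geodesic closeness between children at level $n+1$ is witnessed by a configuration already visible in a bounded neighborhood of $v$; this is the step where I expect to spend the most care chasing the thin triangles constant.

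Once these axioms are checked, the cited theorem from \cite{rushton2015all} produces a finite subdivision rule $S$ acting on a complex $X$ whose history graph is quasi-isometric to $\Xi(G)$, and hence to the Cayley graph of $G$; the three-dimensional realization is built into that construction and requires no extra work here.
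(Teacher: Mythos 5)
Your overall architecture matches the paper's: the same graph $\Xi(G)$, the same grading by spheres about the identity, predecessors taken in the lex-first tree $\Gamma$, finiteness of labels from Corollary \ref{CannonCor} and Theorem \ref{CannonTheorem}, the same two-sided edge comparison to get the quasi-isometry with the Cayley graph, and the same appeal to the realization theorem of \cite{rushton2015all}. The one place where your proposal genuinely diverges, and where it has a gap, is the edge labeling. You label a horizontal edge by the ordered pair of its endpoint labels; the paper labels it by the cone types of the endpoints \emph{together with the relative group element} $j(x)^{-1}j(y)$. That extra datum is not decorative. Condition 6 requires that two edges with the same label have labeled-isomorphic preimages under $\pi$. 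The preimage of a horizontal edge with endpoints $x,y$ consists of the children of $x$, the children of $y$, and the geodesic-closeness relation between these two families, and which child of $x$ is close to which child of $y$ depends on the actual relative position of $x$ and $y$ in the group, not merely on their types or neighborhoods. If $x$ has two neighbors $xh_1$ and $xh_2$ carrying identical vertex labels, your scheme assigns the edges $(x,xh_1)$ and $(x,xh_2)$ the same label even though $h_1\neq h_2$, and nothing in your argument shows their preimages under $\pi$ are isomorphic. Recording $j(x)^{-1}j(y)$ --- a bounded amount of data, since Lemma \ref{CloseLem} bounds it inside the ball of radius $2\delta+1$ --- pins down the relative position of the two families of children and closes this gap.

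A second, smaller point: you explicitly defer the verification that the cone $K$-neighborhood of a vertex determines the cone $K$-neighborhoods of its children, estimating that $K$ of order $4\delta+2$ should work. This is indeed the delicate propagation step; the paper asserts the analogous claim for $K=2\delta+1$ with little justification, so your instinct to spend care there is sound, but as written your proof is incomplete at exactly that point. Everything else --- conditions 1 through 4, the finiteness counts, and the quasi-isometry estimates --- is essentially the paper's argument.
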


\begin{proof}
Let $\Xi_n$ be the sphere of radius $n$ about the identity in $\Xi$, and let $\Xi_0$ be the identity.

Then conditions 1 and 2 of the definition of a combinatorial subdivision graph are satisfied automatically by $\Xi$ and its subgraphs $\Xi_n$. Because there is a unique lexicographically first geodesic to each vertex in $\Gamma$ (and thus in $\Xi$), condition 3 is also satisfied.

To see that condition 4 is satisfied, let $e$ be an edge between two vertices $u_1,u_2$ of $\Xi_n$. Then the corresponding vertices in $\Gamma$ are geodesically close, so there are geodesics $\gamma_i$ starting from $i(u_i)$ that approach within a distance of 1. However, by appending the segments connecting each $i(u_i)$ to its predecessor, we see that the predecessors of the $i(u_i)$ are either equal or geodesically close. Thus the $u_i$ are either equal or connected by an edge, satisfying condition 4.

By Lemma \ref{CloseLem}, if two vertices $j(u_1),j(u_2)$ of the Cayley graph of $G$ have the same $2\delta+1$-cone neighborhoods, then the corresponding vertices $u_1,u_2$ in $\Xi$ have the same set of horizontal edges (i.e. edges contained in a single $\Xi_n$) extending from them. They will also have the same vertical edges, which are determined by the cone type of the vertex itself. Thus, the corresponding vertices $u_1,u_2$ in $\Xi$ will have isomorphic open stars (with horizontal edges being determined by the neighborhood and vertical edges being determined by the level; recall that every vertex besides the identity has exactly one vertical, upwards-leading edge). 

We now show that the vertex subdivision of a vertex  $x$ in $\Xi$ is determined by the cone $2\delta +1$ neighborhood of $j(x)$ in the Cayley graph of $G$. First, consider $\Gamma$. By Corollary \ref{CannonCor}, $\Gamma$ has only finitely many cone types, and the vertical edges of $\Gamma$ are the vertical edges of $\Xi$; by definition, two vertices $x, x'$ in $\Xi$ with the same cone type will have the isomorphic sets (with the isomorphism preserving cone types) of \emph{vertices} $x_1,x_2,...,x_k$ and $x'_1,x'_2,...,x'_k$ in the preimage under the predecessor map. Next, the cone $2\delta+1$ neighborhoods of $x$ and of $x'$ determine the cone $2\delta +1$ neighborhoods of the vertices $x_1,x_2,...$ and $x'_1,x'_2,....$ respectively, which by the preceding paragraph shows that if $x_i$ and $x_j$ are connected by an edge, then $x'_i$ and $x'_j$ are connected by an edge). This is not yet a labeled isomorphism, as we have not defined the labels for edges.

We now do so. We label all vertical edges with the same label. Then, given a horizontal edge $e$ in $\Xi$ with vertices $x$ and $y$, we label it by the cone types of $j(x)$ and $j(y)$, and by the \textbf{relative group element} $j(x)^{-1}j(y)$.

Consider such a horizontal edge $e$ in $\Xi$ with vertices $x$ and $y$. If $f$ is in the preimage of $e$ under the predecessor map, then it has a vertex $u$ in the preimage of $x$ and a vertex $v$ in the preimage of $y$, and $u$ and $v$ must be geodesically close. Conversely, given two vertices that are geodesically close, with one in the preimage of $x$ and the other in the preimage of $y$, the edge containing them will be in the preimage of $e$. Thus, the edges in the preimage are determined completely by the vertices in the preimages of the endpoints and by the relative distances between them. The former are determined by the cone types, and the latter by both the cone types and the relative group element $xy^{-1}$ (which allows us to know the relative position of the two groups of vertices). Also, the cone types of two vertices determine the cone types of their preimages under the predecessor map, and the cone type plus the relative group element also allows us to determine the relative group elements $u^{-1}v$ for all preimages $u$ and $v$. Thus, there are finitely many edge types.

We can now define vertex labels: we label each vertex's open star by its cone $2\delta+1$ neighborhood. This determines all of the edge types of the open star, as well as the cone $2\delta+1$-neighborhoods of the vertices in the preimage (and thus their labels). Thus, $\Xi$ is a combinatorial subdivision rule.


We need to show that $\Xi$ is quasi-isomorphic to the Cayley graph of $G$. It is sufficient to show that if 2 vertices are connected by an edge in one graph then the corresponding vertices are less than $2\delta+2$ apart in the other graph, and vice versa.

Let $u,v$ be vertices in $\Xi$ connected by a vertical edge. Then since the vertical edges all correspond to lexicographically first edges in the Cayley graph of $G$, the corresponding vertices in the Cayley graph of $G$ are connected by an edge.

Let $u,v$ be vertices in $\Xi$ connected by a horizontal edge. Then the corresponding vertices are geodesically close, so by Lemma \ref{CloseLem}, they are less than $2\delta+1<2\delta+2$ apart.

Now let $u,v$ be vertices in the Cayley graph of $G$ connected by a horizontal edge. Because they are only 1 apart, they are geodesically close, and their corresponding vertices in $\Xi$ are connected by an edge as well.

If $u,v$ in the Cayley graph of $G$ are connected by a vertical edge, with $u$ further from the origin than $v$, then let $u'$ be the next closest element to the origin along the lexicographically first geodesic to $u$. Then there are geodesics in the Cayley graph of $G$ passing through $u',v$ and leading to the same vertex $u$, so the vertices corresponding to $u'$ and $v$ are connected by a horizontal edge in $\Xi$, and $u$ and $u'$ are connected by a vertical edge in $\Xi$, for a total distance of 2 or less, which is less than $2\delta +2$.




\end{proof}

\bibliographystyle{plain} \bibliography{hypersubs}

\end{document}